\newtheorem*{thm}{Theorem}
\newtheorem{lemma}[subsection]{Lemma}
\newtheorem{proposition}[subsection]{Proposition}
\theoremstyle{definition}
\newtheorem{example}[subsection]{Example}
\newtheorem{remark}[subsection]{Remark}
\numberwithin{equation}{subsection}
\def\calH{\mathcal{H}}
\def\calO{\mathcal{O}}
\def\calV{\mathcal{V}}
\def\calW{\mathcal{W}}
\def\gothC{\mathfrak{C}}
\def\gothE{\mathfrak{E}}
\def\AAA{\mathbb{A}}
\def\CC{\mathbb{C}}
\def\GG{\mathbb{G}}
\def\NN{\mathbb{N}}
\def\QQ{\mathbb{Q}}
\def\RR{\mathbb{R}}
\def\ZZ{\mathbb{Z}}
\def\bff{\mathbf{f}}
\def\rmR{\mathrm{R}}
\def\ttv{\mathtt{v}}
\DeclareMathOperator{\Gal}{Gal}
\DeclareMathOperator{\Hom}{Hom}
\DeclareMathOperator{\HT}{HT}
\DeclareMathOperator{\Spec}{Spec}
\DeclareMathOperator{\Spm}{Spm}
\DeclareMathOperator{\Sym}{Sym}
\newcommand{\can}{\mathrm{can}}
\newcommand{\cont}{\mathrm{cont}}
\newcommand{\cycl}{\mathrm{cycl}}
\newcommand{\Fil}{\mathrm{Fil}}
\newcommand{\id}{\mathrm{id}}
\newcommand{\pr}{\mathrm{pr}}
\newcommand{\Qp}{\QQ_p}
\newcommand{\rig}{\mathrm{rig}}
\newcommand{\Tate}{\mathrm{Tate}}
\newcommand{\WT}{\mathrm{WT}}
\newcommand{\Zp}{\ZZ_p}
\begin{document}

\title{Gauss--Manin connections for $p$-adic families of nearly overconvergent modular forms}
\author{Robert Harron}
\author{Liang Xiao}

\date{July 14, 2014 (originally: August 7, 2013)}

\keywords{Gauss--Manin connections, Nearly overconvergent modular forms, Eigencurves, Families of $p$-adic modular forms}

\begin{abstract}
We interpolate the Gauss--Manin connection in $p$-adic families of nearly overconvergent modular forms.
This gives a family of Maass--Shimura type differential operators from the space of nearly overconvergent modular forms of type $r$ to the space of nearly overconvergent modular forms of type $r+1$ with $p$-adic weight shifted by $2$.
Our construction is purely geometric, using Andreatta--Iovita--Stevens and Pilloni's geometric construction of eigencurves, and should thus generalize to higher rank groups.

\vspace*{12pt}
\noindent\textsc{R\'esum\'e.} Nous obtenons l'interpolation de la connexion de Gauss--Manin en familles $p$-adiques de formes modulaires quasi-surconvergentes. Ceci donne une famille d'op\'erateurs diff\'erentiels \`a la Maass--Shimura qui envoie l'espace de formes modulaires quasi-surconvergentes de type $r$ dans celui de formes modulaires quasi-surconvergentes de type $r+1$ et de poids $p$-adique augment\'e par $2$. Notre m\'ethode est purement g\'eom\'etrique, utlise les constructions g\'eom\'etriques des courbes de Hecke dues \`a Andreatta--Iovita--Stevens et Pilloni, et devrait donc se g\'en\'eraliser aux groupes de rang sup\'erieur.
\end{abstract}

\subjclass[2010]{Primary: 11F33; Secondary: 14F40}

\maketitle

\setcounter{tocdepth}{1}
\tableofcontents

\section{Introduction}
In this article, we seek to combine two important tools in arithmetic: the nearly holomorphic modular forms of Shimura and the $p$-adic families of modular forms of Hida and Coleman--Mazur. The former are an integral part of Shimura's study of the algebraicity of values of automorphic $L$-functions while the latter have become a ubiquitous tool in number theory with recent applications including the proof of Serre's conjecture, the proof of the Fontaine--Mazur--Langlands conjecture for $GL(2)$, and the construction of automorphic Galois representations to name a few. The $p$-adic theory of nearly holomorphic modular forms has recently emerged from the work of Darmon--Rotger \cite{DarmonRotger} and Skinner--Urban (see \cite{Urban2}) with applications to the Birch--Swinnerton-Dyer and Bloch--Kato conjectures for elliptic curves and to the theory of Stark--Heegner points. Since it is sufficient for their purposes, these authors rely on $q$-expansions in their definition of nearly overconvergent modular forms. Our approach, via the work of Andreatta--Iovita--Stevens \cite{AIS} and Pilloni \cite{pilloni}, is geometric: we give a geometric definition of nearly overconvergent modular forms of arbitrary $p$-adic weight and construct $p$-adic families of Gauss--Manin connections varying over the weight space. This provides a robust theory that is amenable to generalization.

In order to state our main theorem, let us introduce the following notation: for an affinoid algebra $A$, a continuous character $\chi_A: \ZZ_p^\times \to A^\times$, a non-negative integer $r$, and a positive rational number $v$, let $M_r^{\dagger,v}(\chi_A)$ denote the Banach $A$-module of nearly overconvergent modular forms of type $r$, weight $\chi_A$, and radius of convergence $v$. Our main result is then the following.

\begin{thm}
For sufficiently small $v$, we have an $A$-linear map
\[
\nabla^{\chi_A, r}: M_r^{\dagger,v}(\chi_A) \to M_{r+1}^{\dagger,v}(\chi_A\chi_\cycl^2),
\]
such that for each point $x \in \Spm(A)$ that corresponds to a classical weight, the specialization of $\nabla^{\chi_A,r}$ to $x$ restricts to the classical Gauss--Manin connection.
\end{thm}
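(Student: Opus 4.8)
The strategy is to build the family connection $\nabla^{\chi_A,r}$ as a $p$-adic interpolation of the classical Gauss--Manin connection, using the geometric models of overconvergent modular forms from \cite{AIS} and \cite{pilloni}. First I would recall that over the ordinary (or more generally, the radius-$v$ overconvergent) locus of the modular curve, the universal semi-abelian scheme carries a first de Rham cohomology sheaf $\calH_{\dR}^1$ together with the Gauss--Manin connection $\nabla_{\GM}\colon \calH_{\dR}^1 \to \calH_{\dR}^1 \otimes \Omega^1$. The Hodge filtration gives a line subbundle $\omega \subset \calH_{\dR}^1$, and by Katz--Oda, the composite $\omega \hookrightarrow \calH_{\dR}^1 \xrightarrow{\nabla_{\GM}} \calH_{\dR}^1 \otimes \Omega^1 \to (\calH_{\dR}^1/\omega) \otimes \Omega^1$ is the Kodaira--Spencer isomorphism, which identifies $\Omega^1 \cong \omega^{\otimes 2}$. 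Nearly holomorphic forms of type $r$ are, geometrically, sections of $\Sym^r \calH_{\dR}^1 \otimes \omega^{k-r}$ (suitably interpreted), and the classical Maass--Shimura operator is $\nabla_{\GM}$ followed by the Kodaira--Spencer identification, landing in type $r+1$ and weight $k+2$. So classically everything is the Gauss--Manin connection on $\Sym^\bullet \calH_{\dR}^1$ twisted by powers of $\omega$.

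**Interpolating in the weight.**

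The key point is that the integer power $\omega^{k-r}$ must be replaced by a $p$-adic analytic object depending on $\chi_A$. Here I would invoke the construction of \cite{AIS}/\cite{pilloni}: over a strict neighborhood of the ordinary locus there is a locally free rank-one sheaf (roughly, the "$\chi_A$-th power of $\omega$", built from the Hodge--Tate period and the canonical subgroup) whose global sections, tensored appropriately, give $M_0^{\dagger,v}(\chi_A)$; and more generally one has the sheaf $\Sym^r \calH_{\dR}^1$ — which is defined integrally and needs no interpolation since $r$ is an honest integer — tensored with this $\chi_A$-twisted line bundle, whose sections give $M_r^{\dagger,v}(\chi_A)$. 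The construction of the family connection then amounts to: (i) extend $\nabla_{\GM}$ to a connection on $\Sym^r \calH_{\dR}^1$ by Leibniz; (ii) equip the $\chi_A$-twisted line bundle with a connection interpolating $d\log$ of the classical $\omega^{k-r}$ — this is where one uses that $\chi_A$ is a $p$-adic analytic (indeed locally analytic) character, so that the formal expression $\chi_A'/\chi_A$ makes sense $p$-adically after trivializing $\omega$ via the canonical subgroup; (iii) take the total connection via Leibniz, and (iv) compose with the Kodaira--Spencer isomorphism $\Omega^1 \cong \omega^{\otimes 2}$ to land in weight $\chi_A\chi_{\cycl}^2$ and type $r+1$. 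One must check this is well-defined on the Banach module level (continuity, independence of choices of trivialization up to the coherent gluing data) and that "sufficiently small $v$" is forced by the radius of overconvergence of the canonical subgroup and of the relevant period maps.

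**Checking classical specialization.**

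For the compatibility statement, I would take $x \in \Spm(A)$ corresponding to a classical weight $k \in \ZZ$, so that $\chi_A$ specializes to $z \mapsto z^k$ (up to a finite-order character, which I would either absorb into the level or handle by a parallel twist). Then the $\chi_A$-twisted line bundle specializes to the honest line bundle $\omega^{\otimes(k-r)}$ on the overconvergent locus, the interpolated connection in step (ii) specializes to the integer $d\log$ connection $(k-r)\,\omega^{-1}d\omega$, and hence the total connection specializes to the classical Gauss--Manin/Maass--Shimura operator on $\Sym^r\calH_{\dR}^1 \otimes \omega^{k-r}$. Restricting to the overconvergent locus from the full modular curve changes nothing since Gauss--Manin is functorial for open immersions; and restricting further to classical (non-overconvergent) forms recovers exactly Shimura's operator by the Katz--Oda description. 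So the claimed restriction to the classical Gauss--Manin connection holds.

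**Main obstacle.**

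The crux of the argument is step (ii): making sense of a connection on the $\chi_A$-twisted sheaf of weight $\chi_A$. The sheaf itself is defined in \cite{AIS}/\cite{pilloni} via the Hodge--Tate map on the canonical subgroup, i.e. as a subsheaf of functions on a torsor that transform by $\chi_A$; equipping it with a connection requires differentiating the character $\chi_A$ and controlling the resulting object on overlaps, which is exactly where the analyticity of $\chi_A$ (a property of points of weight space, used with care to get uniform radii over the affinoid $A$) enters and where the constraint on $v$ is generated. I expect the bookkeeping of the Leibniz rule across the twisted line bundle and the Kodaira--Spencer identification — together with verifying that the resulting map is bounded $A$-linear between the Banach modules — to be the technical heart, while the classical specialization is then essentially formal.
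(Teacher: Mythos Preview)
Your construction of a connection on $\boldsymbol\omega^{\chi_A}$ (step (ii)) is indeed the heart of the matter and matches the paper's construction of $\partial^{\chi_A}$ via the $G_n$-torsor $F_n^\times$. However, the Leibniz recipe in steps (i)--(iv) does \emph{not} recover the classical Gauss--Manin map $\nabla^{k,r}$, so the specialization claim fails.

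Concretely: a connection on $\boldsymbol\omega^{\chi_A\chi_\cycl^{-r}}$ combined via Leibniz with $\nabla_{\mathrm{GM}}$ on $\Sym^r\calH$ is an honest connection on the tensor product, hence a map to $(\boldsymbol\omega^{\chi_A\chi_\cycl^{-r}}\otimes\Sym^r\calH)\otimes\Omega^1$. After Kodaira--Spencer this lands in $M_r^{\dagger,v}(\chi_A\chi_\cycl^2)$, i.e.\ inside the image of the inclusion $M_r\hookrightarrow M_{r+1}$; it never produces a genuinely type-$(r+1)$ component. But classically $\nabla^{k,r}$ \emph{does}. Already for $r=0$: the map $\nabla^{k,0}\colon\omega^k\to\omega^{k-1}\otimes\calH\otimes\Omega^1$ is the restriction of $\nabla_{\mathrm{GM}}$ on $\Sym^k\calH$ to $\Fil^k$, and in any splitting $\calH=\omega\oplus\omega^{-1}$ it reads $f\mapsto(\partial^{\otimes k}f,\,kf)$; the second component lives in the $\omega^{-1}$-direction and is precisely the piece that raises the type. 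No connection on $\omega^k$ alone (which by definition takes values in $\omega^k\otimes\Omega^1$) can produce it. In other words, what you need to interpolate on the $\omega^{k-r}$ factor is not a connection on $\omega^{k-r}$ but the differential operator $\nabla^{k-r,0}\colon\omega^{k-r}\to\omega^{k-r-1}\otimes\calH\otimes\Omega^1$, whose Kodaira--Spencer component carries the coefficient $(k-r)$.

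The paper handles this by first choosing a splitting $\eta\colon\calH\to\omega$, which decomposes $\nabla_{\mathrm{GM}}$ on $\calH$ into a connection $\partial$ on $\omega$, the identity (Kodaira--Spencer), and a section $\lambda\in H^0(X,\omega^4)$. Katz's formula (Lemma~\ref{L:katz}) then expresses $\nabla^{k,r}$ on each summand $\omega^{k-2a}$ as $f\mapsto\bigl((k-a)f,\ \partial^{\otimes(k-2a)}f,\ a\lambda f\bigr)$. Now \emph{two} things are interpolated separately: $\partial^{\otimes k}\rightsquigarrow\partial^{\chi_A}$ via the torsor picture (this is essentially your step (ii)), and the scalar $k-a\rightsquigarrow\WT(\chi_A)-a$. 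The resulting operator is then shown to be independent of the splitting $\eta$. Your proposal captures the first interpolation but omits the second, and without it the classical specialization check at the end cannot succeed.
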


This theorem is also proved in \cite{urban} using a subtle argument heavily dependent on the $q$-expansion principle.  Our construction (which was worked out independently) is purely geometric and hence can be easily generalized to similar situations whenever there is a geometric construction like the ones given by Andreatta--Iovita--Stevens \cite{AIS} and Pilloni \cite{pilloni}.

We now briefly sketch our construction of $\nabla^{\chi_A,r}$.
Let $X$ be the modular curve over $\QQ_p$, with cuspidal subscheme $C$, and let $E$ be the universal generalized elliptic curve over $X$.  Let $\omega \subseteq \calH$ be the sheaf of relative $1$-differentials and the relative de Rham cohomology sheaf of $E \to X$, respectively.
For integers $0 \leq r <k$, the Gauss--Manin connection gives rise to a connection $\nabla^{k,r}: \omega^{k-r} \otimes \Sym^r \calH \to \omega^{k+1-r} \otimes \Sym^{r+1} \calH$.
We choose a splitting of the Hodge filtration $\calH = \omega \oplus \omega^{-1}$.
Then, the Gauss--Manin connection $\nabla: \calH \to \calH \otimes \Omega_X^1(\log C)$ can be reconstructed from a 
differential operator $\partial: \omega \to \omega \otimes \Omega^1_X(\log C)$.
We interpret $\partial$ as a connection on the associated principal $\GG_m$-bundle for $\omega$, which we further restrict to a connection on the principal bundle for the rigid balls inside $\GG_m$ appearing in the construction of Andreatta--Iovita--Stevens \cite{AIS} and Pilloni \cite{pilloni}.  Then, we obtain our family version of $\partial$ by applying the formal dictionary between principal bundles and vector bundles associated to the representation of the monodromy group.
The family of Gauss--Manin connections can be then reconstructed from the family version of $\partial$ as in Katz's paper \cite{katz}.

One of the applications of our result is the construction of the $p$-adic Rankin--Selberg convolution over the product of two Coleman--Mazur eigencurves, which was in fact the original motivation of this paper.  We now refer to \cite{urban} for this construction.

We also note that our method of construction should be quite general. For instance, we expect a similar construction for the case of Siegel modular forms making use of the work of Andreatta--Iovita--Pilloni \cite{AIP} (or its Hilbert--Siegel and PEL variants).

\subsection*{Acknowledgments}
We thank Tsao-Hsien Chen, Matthew Emerton,  Madhav Nori, and Eric Urban for useful discussions. We also thank the referee for pointing out some typos and inaccuracies.

\section{A geometric construction of nearly overconvergent modular forms}
We first review the geometric construction of families of nearly overconvergent modular forms. 
This approach essentially follows from the work of Andreatta, Iovita, and Stevens \cite{AIS} and, independently, Pilloni \cite{pilloni}.  The advantage of this construction is that nearly overconvergent modular forms are on the nose sections of certain vector bundles.

\subsection{Weight spaces}
We fix a prime number $p$.  Let $\CC_p$ denote the completion of a fixed algebraic closure of $\QQ_p$.  It is equipped with a valuation $\ttv: \CC_p^\times \to \RR$ normalized so that $\ttv(p) = 1$.  Let $|\cdot| = p^{-\ttv(\cdot)}$. 
Put $q = 4$ if $p =2$ and $q =p$ if $p \neq 2$.\footnote{The confusion with $q = e^{2 \pi iz}$ later should be minimal.} 
 We can then write $\ZZ_p^\times = (\ZZ/q\ZZ)^\times \times (1+ q\ZZ_p)^\times$.

We put $\Gamma = \Gal(\QQ(\mu_{p^\infty})/\QQ)$, where $\mu_{p^\infty}$ denotes the collection of all $p$-power roots of unity.  We use the \emph{cyclotomic character} $\chi_\cycl: \Gamma \to \ZZ_p^\times$ to identify these two groups.

Let $\calW$ denote the rigid analytic space over $\Qp$ whose $K$-points are $\calW(K) = \Hom_\cont(\Gamma, K^\times)$ for any complete field extension $K$ of $\QQ_p$.  We have an isomorphism
\[
\xymatrix@R=0pt@C=40pt{
\calW \ar[r]^-{\cong} &
\widehat{(\ZZ/q\ZZ)^\times} \times B(1, 1^-)\\
\chi \ar@{|->}[r] &\big(\ \chi|_{(\ZZ/q\ZZ)^\times},\ \chi(\exp(q))\ \big),
}
\]
where $B(1, 1^-)$ denotes the open unit ball of radius 1 centered at $1$.

A \emph{classical character} of $\Gamma$ is a character of the form $\epsilon\chi_\cycl^k :  \Gamma \to K^\times$ with $k \geq 2$ an integer and $\epsilon$ a \emph{finite order} continuous character of $\Gamma$.  We will use $(\epsilon, k)$ to denote such a character and also the associated point on $\calW$.  The \emph{$p$-conductor} of such $\epsilon$ is $p^n$, where $n = n(\epsilon)$ is the maximal \emph{positive integer} such that $\epsilon$ factors through $(\ZZ/p^{n-1}q\ZZ)^\times$. (In particular, if $\epsilon$ is trivial, our convention says that $n(\epsilon)=1$.)  We remark that the set of classical characters is Zariski dense in the weight space $\calW$.

Let $A$ be an affinoid algebra over $\QQ_p$.  (A good example to keep in mind is the ring of analytic functions on an affinoid subdomain of the weight space $\calW$.)
For a continuous character $\chi_A: \ZZ_p^\times \to A^\times$, its \emph{weight} is defined to be 
\[
\WT(\chi_A) := \lim_{a \to 1} \frac{\log(\chi_A(a))}{\log a} \in A.
\]
We sometimes view it as a function on $\Spm(A)$.  When $\chi_A = (\epsilon,k)$ is a classical character, its weight is simply $k$.

\subsection{Nearly algebraic modular forms}\label{sec:nearlyalgebraic}
Let $X\to \Spec \ZZ_p$ denote the compactified modular curve with hyperspecial level at $p$ over $\Spec \ZZ_p$.
We always take the tame level subgroup to be sufficiently small so that $X$ is a fine moduli space of generalized elliptic curves.
We fix such a tame level throughout the paper.
Let $C$ denote the cusp subscheme.  Let $\pi: E \to X$ be the universal generalized elliptic curve.
Put $\tilde C = \pi^{-1}(C)$; it is a divisor of $E$ with simple normal crossings.
Let $e: X \to E$ be the unit section and put $\omega: =e^*\Omega_{E/X}^1$. The Kodaira--Spencer isomorphism gives
\[
\Omega^1:=\Omega^1_{X/\ZZ_p}(\log C)\cong
\omega^{\otimes 2}.
\]
We use $\calH$ to denote the relative de Rham cohomology $\rmR^1\pi_*(\Omega^\bullet_{E/X}(\log \tilde C))$.\footnote{Here, $\Omega^1_{E/X}(\log \tilde C) = \Omega^1_{E/\Zp}(\log \tilde C) / \pi^*\Omega^1_{X/\Zp}(\log C)$.}

For integers $k \geq 2$ and $r \in [0,\frac k2-1]$, following Urban \cite{urban}, we define the \emph{nearly algebraic modular forms} of weight $k$ and type $r$ to be 
\[
M_{k, r} := H^0(X_{\Qp}, \omega^{k-r} \otimes \Sym^r \calH).
\]
When $r=0$, this recovers the space of usual modular forms $M_k$, which we call \emph{classical modular forms} in this paper.  These nearly algebraic modular forms are closely related to the nearly holomorphic modular forms of Shimura.  We refer to \cite{urban} for a discussion of the relation between the two.

\subsection{The ball fibration}
We now review a construction given in \cite{AIS} and \cite{pilloni}.
We put $T = \Spec(\oplus_{n \in \ZZ_{\geq 0}} \omega^{-n})$ and $T^\times = \Spec(\oplus_{n \in \ZZ} \omega^n)$.  The space $T$ may be viewed as the physical line bundle over $X$ associated to $\omega$ and $T^\times$ as the $\GG_m$-torsor that defines the line bundle $T$.  Let $X_\rig$ denote the analytification of $X_{\Qp}$ (as a rigid analytic space in the sense of Tate).  Let $T_\rig$ be the analytification of $T_{\QQ_p}$.  Let $T_\rig^\times$ be the analytification of $T^\times_{\Qp}$; it is a $\GG_{m, \rig}$-torsor over $X_\rig$.

There is a continuous map $\deg: X_\rig \to [0,1]$, given by the valuation of the truncated Hasse invariant.  
More precisely, we fix a lift $\tilde h \in H^0(X, \omega^{p-1})$ of the Hasse invariant; for $x \in X_\rig$, $\deg(x) = \min\{ \ttv(\tilde h(x)), 1\}$.\footnote{The notation deg refers to the degree of the corresponding finite flat $p$-group scheme in the sense of \cite{fargues}.}
For any $v \in [0,1]\cap p^\QQ$, $X(v) :=\{x \in X_\rig \;|\; \deg(x) \leq v\}$ is a subdomain of $X_\rig$.  In particular, $X(0)$ is the tube of the ordinary locus of the special fiber of $X$.

For $n \in \ZZ_{\geq1}$ and $v \in [0, \frac 1{p^{n-2}(p+1)})$, the universal generalized elliptic curve $E$ over $X(v)$ has a canonical subgroup $C_n$ of order $p^n$.  
For each closed point $x \in X(v)(K)$, we use $C_{n,x}$ to denote the corresponding canonical subgroup of the (generalized) elliptic curve $E_x$ at the point $x$.  We may choose formal models $\gothC_{n,x}$ and $\gothE_x$ of both objects over $\calO_K$.
We have a Hodge--Tate map
\begin{equation}
\label{E:Hodge--Tate-map}
\HT: \gothC_{n,x}^D(\calO_{\CC_p}) \to e^*(\Omega^1_{\gothC_{n,x} /\calO_K}) \otimes_{\calO_K} \calO_{\CC_p},
\end{equation}
where $\gothC_{n,x}^D$ denotes the Cartier dual of $\gothC_{n,x}$. 

Using the Hodge--Tate map,\footnote{Technically speaking, just knowing the description of Hodge--Tate map over closed points is not enough to prove the existence of $F_n$.  Pilloni \cite{pilloni} needed a version of the Hodge--Tate map over a formal scheme, just as in \cite{AIS}.  However, this subtlety does not matter to our discussion, so we ignore it.} Pilloni \cite[Th\'eor\`eme~3.2]{pilloni} showed that there exists an open rigid subdomain $F_n \hookrightarrow T_\rig \times_{X(v)} C_n^D$ such that, for any closed point $x \in X(v)(K)$, we have
\[
F_n|_x(\CC_p) = \big\{(y, \omega) \in \gothC_{n,x}^D(\CC_p) \times \big(e^*\Omega_{\gothE_x/\calO_K}^1 \otimes_{\calO_K}  \calO_{\CC_p}\big) \text{ with } \HT(y) = \omega|_{e^*\Omega^1_{\gothC_{n,x} /\calO_K}\otimes_{\calO_K} \calO_{\CC_p}} \big\}
\]
where $F_n|_x$ denotes the fiber of $F_n$ above $x \in X(v)(K)$.

Let $(C_n^D)^\times$ denote the union of the connected components of $C_n^D$ formed by local generators of $C_n^D$.
We put $F_n^\times = F_n \times_{C_n^D} (C_n^D)^\times$. 
When $v<\frac{p-1}{p^n}$, Pilloni \cite[Proposition~3.5]{pilloni} proved that the natural morphism $F_n^\times \to F_n \to T_\rig$ factors through $T_\rig^\times$ and is an open immersion.  In a more explicit form, over $x \in X(v)(\CC_p)$, we may choose a generator of $\omega$ over $X$ and hence identify $T_\rig$ with $\AAA^1_{\CC_p, \rig}$.  Then 
\[
F_n^\times|_x(\CC_p) =  \coprod_{m \in (\ZZ/p^n\ZZ)^\times}
x_m h^{1/(p-1)} + p^n h^{-\frac{p^n-1}{p-1}}\calO_{\CC_p} \subset \CC_p,
\]
where $h \in \CC_p$ is some element with $\ttv(h) = \deg(x)$ and $\{x_m\, |\,m \in (\ZZ/p^n\ZZ)^\times\}$ is some fixed set of lifts of $(\ZZ/p^n\ZZ)^\times$ to $\ZZ_p^\times$.

From this, it is easy to see (and is also explained in \cite[\S3.4]{pilloni} and \cite[\S3.2]{AIS} implicitly) that $\pr_n: F_n^\times \to X(v)$ is a torsor for the open subgroup $G_n \subseteq X(v) \times \GG_{m, \rig}$ given by
\begin{equation}
\label{E:G_n}
G_n(\CC_p) = \big\{(x, a) \in X(v)(\CC_p) \times \CC_p^\times \;\big|\; |a-m|\leq p^{-n} p^{p^n\deg(x)/(p-1)} \textrm{for some }m \in \ZZ_p^\times
\big\}.
\end{equation}

\begin{remark}
The following viewpoint of the above construction was communicated to us by Emerton.
The line bundle $\omega$ defines a $\GG_m$-torsor over (the entirety of) $X$; this is why we can consider integer powers of $\omega$ and define modular forms with integer weights.
The dual Tate module of the $p^\infty$-canonical subgroup of $E$ only exists over the ordinary locus $X(0)$; it gives rise to a $\ZZ_p^\times$-torsor over $X(0)$.  This is why Hida can consider $p$-adic families of modular forms parameterized by $\ZZ_p\llbracket \ZZ_p^\times \rrbracket$.
Over the ordinary locus, the Hodge--Tate map is an isomorphism.  It identifies the aforementioned dual Tate module as a $\ZZ_p^\times$-subtorsor of the $\GG_m^\times$-torsor $T^\times$.  As we include some supersingular locus, i.e. we work over $X(v)$, the Hodge--Tate map fails to be an isomorphism.  Instead, we see a torsor $F^\times_n$ for the group $G_n$, which is a group sitting between $\ZZ_p^\times$ and $\GG_{m, \rig}$.
\end{remark}

\subsection{Locally analytic characters}
\label{S:locally analytic characters}
Let $A$ be an affinoid algebra and $\chi_A: \ZZ_p^\times \to A^\times$ a continuous character. 
Then there exists an integer $n \geq 3$ such that $\lambda_{A,n-1}: = \chi_A(\exp(p^{n-1})) \in A$ satisfies $|\lambda_{A, n-1}-1|<p^{-1/(p-1)}$.\footnote{Here, we did not optimize the choice of $n$ for simplicity of the presentation; see \cite[Section~2.1]{pilloni} for a careful discussion of the optimal bound.}
For any such $n$, the character $\chi_A:  \ZZ_p^\times \to A^\times$ extends by continuity to a \emph{locally analytic character}
\[
\xymatrix@R=0pt@C=15pt{
\chi_A^\mathrm{an} = (\chi_A, \lambda_A): & \ZZ_p^\times\cdot (1+p^{n-1}\calO_{\CC_p})
^\times \ar[rr]&& (A \widehat\otimes \CC_p)^\times\\
& x\cdot z \ar@{|->}[rr] && \chi_A(x) \cdot \lambda_{A,{n-1}}^{\log(z)/p^{n-1}},
}
\]
where $x \in \ZZ_p^\times$ and $z \in (1+p^{n-1}\calO_{\CC_p})
^\times$.  In other words, this defines a family of representations of the rigid analytic group
\[
\overline{G}_{n-1} = \coprod_{m \in (\ZZ/p^{n-1}\ZZ)^\times} B(x_m, p^{1-n})
\]
parametrized by $\Spm(A)$,
where $B(x_m, p^{1-n})$ denotes the union of the  closed disks of radius $p^{1-n}$ centered at the chosen representatives $x_m \in \ZZ_p^\times$ as $m$ varies over all classes in $(\ZZ/p^{n-1}\ZZ)^\times$.

\subsection{Overconvergent sheaves}
Keep the notation as above.
Choose $v \in (0, \frac{p-1}{p^{n}}) \cap \QQ$.  
The representation $\chi_A^\mathrm{an}$ of $\overline{G}_{n-1}$ induces a line bundle $\calV$ over $X(v) \times \Spm(A)$ on which $\overline{G}_{n-1} \times X(v)$ acts.
Our assumption on $v$ ensures that $G_n$ is a subgroup of $\overline{G}_{n-1} \times X(v)$ by the description~\eqref{E:G_n}.  We may thus consider the induced action of $G_n$ on $\calV$.

Let $\mathrm{pr}_n \times \id: F^\times_n \times \Spm A \to X(v) \times \Spm A$ denote the map induced by $\mathrm{pr}_n$.
Following \cite[\S5.1]{pilloni}, we define the \emph{modular sheaf} with character $\chi_A$ to be
\[
\boldsymbol \omega^{\chi_A}:= \big((\mathrm{pr}_n \times \id)_* \calO_{F_n^\times \times \Spm A} \otimes_{\calO_{X(v) \times \Spm A}} \calV \big)^{G_n}.
\]
Since $F^\times_n$ is a $G_n$-torsor, $\boldsymbol \omega^{\chi_A}$ is a locally free sheaf over $X(v) \times \Spm(A)$ of rank one.

For any such $\chi_A$, we define an \emph{overconvergent modular form with character $\chi_A$} to be a section of $\boldsymbol \omega^{\chi_A}$ over $X(v) \times \Spm(A)$ for some $v$ as above.  We put
\begin{equation}
\label{E:ocvgt modular forms}
M^{\dagger,v}(\chi_A) = \Gamma\big(X(v) \times \Spm(A), \boldsymbol \omega^{\chi_A}\big)\quad \textrm{ and }\quad M^{\dagger}(\chi_A) = \varinjlim_v M^{\dagger,v}(\chi_A).
\end{equation}
More generally, for $r \in \ZZ_{\geq0}$, we define the space of \emph{nearly overconvergent modular forms} to be
\[
M^{\dagger, v}_r(\chi_A) = 
\Gamma\big(X(v) \times \Spm(A), \boldsymbol \omega^{\chi_A\chi_\cycl^{-r}} \otimes \Sym^r \calH\big)\quad \textrm{ and }\quad M_r^{\dagger}(\chi_A) = \varinjlim_v  M_r^{\dagger,v}(\chi_A).
\]
We have natural inclusions $M^{\dagger(, v)}_{r'}(\chi_A) \hookrightarrow M^{\dagger(, v)}_r(\chi_A)$ if $r' \leq r$, and 
$M^{\dagger(, v)}_0(\chi_A) = M^{\dagger(, v)}(\chi_A)$.

When $\chi_A = \chi_\cycl^k: \ZZ_p^\times \to K^\times$ is  the $k$-th power of the cyclotomic character, $\boldsymbol \omega^{\chi_\cycl^k}$ is the restriction of the usual modular sheaf $\omega^k|_{X(v)}$ for  $v \in (0,1)$ sufficiently close to $0$ (\cite[Proposition~3.6]{pilloni}).  In this case, we put
\[
M^{\dagger, v}_k = M^{\dagger, v}(\chi_A)\quad \textrm{ and }\quad M^{\dagger}_k = M^{\dagger}(\chi_A).
\]
These are the \emph{overconvergent modular forms of weight $k$} in the usual sense; they contain the classical modular forms $M_k$ as a subspace.

Similarly, for an integer $r\geq 0$, we define the space of \emph{nearly overconvergent modular forms} of weight $k$ to be
\[
M^{\dagger, v}_{k,r} = M^{\dagger, v}_r(\chi_A)\quad \textrm{ and }\quad M^{\dagger}_{k,r} = M^{\dagger}_r(\chi_A).
\]
These contain the space of nearly algebraic modular forms $M_{k,r}$ as a subspace when $k \geq 2r+2$.

It is clear from the construction that, when $\chi = (\epsilon,k)$ is a classical character, our construction agrees with that of \cite{urban}.

\begin{remark}
One can define Hecke actions on nearly overconvergent modular forms as in \cite[\S4]{pilloni}.
We refer to {\it loc.\ cit.} for details.
\end{remark}

\section{The Gauss--Manin connection in a $p$-adic family}
We now give the construction of the Gauss--Manin connection in a $p$-adic family over the weight space.

\subsection{Gauss--Manin connections}
\label{S:GM connection}
The relative de Rham cohomology over the modular curve $X$ fits into a canonical exact sequence
\begin{equation}
\label{E:Hodge filtration}
0 \to \omega \to \calH \to \omega^{-1} \to 0.
\end{equation}
Equivalently, $\calH$ admits a Hodge filtration given as follows: $\Fil^i\calH = 0$ for $i >1$; $\Fil^1\calH = \omega$; and $\Fil^i\calH = \calH$ for $i \leq 0$.
The filtration naturally induces a filtration on the symmetric power $\Sym^k \calH$ for $k\geq1$.
In particular, for an integer $r \in [0,k]$, $\Fil^{k-r}\Sym^k\calH \cong \omega^{k-r} \otimes \Sym^r\calH$.

Recall that the relative de Rham cohomology $\calH$ admits a Gauss--Manin connection $\nabla: \calH \to \calH \otimes \Omega^1$, or more generally $\nabla^k: \Sym^k \calH \to \Sym^k\calH \otimes \Omega^1$.
For an integer $r \in [0, k]$, Griffiths transversality implies that the Gauss--Manin connection on $\Sym^k \calH$ induces a differential map
\[
\nabla^{k,r}:\
\omega^{k-r} \otimes \Sym^r\calH \cong \Fil^{k-r}\Sym^k\calH \xrightarrow{ \nabla^k}
\Fil^{k-r-1}\Sym^k\calH \otimes \Omega^1 \cong \omega^{k-r+1} \otimes \Sym^{r+1}\calH.
\]
When $k \geq 2r+2$, taking global sections over $X_{\Qp}$ gives rise to a differential operator
\[
\nabla^{k,r}: M_{k,r} \to M_{k+2, r+1}.
\]
We call it \emph{the classical Gauss--Manin connection}.

\subsection{Splitting of the Hodge filtration}
Our goal is to consider the variation of $\nabla^{k,r}$ as $k$ varies $p$-adically with $r$ \emph{fixed}.
For this, we first  choose a splitting of the Hodge filtration~\eqref{E:Hodge filtration}.
We will then show that two different such choices result in equivalent $p$-adic interpolations.\footnote{In fact, to $p$-adically interpolate Gauss--Manin connections, we need only choose the splitting of the Hodge filtration Zariski locally on $X$.  We choose a global splitting here to simplify the presentation.  However, it would be certainly interesting to know if one can entirely avoid the splitting of Hodge filtration in the following construction.}

A \emph{splitting} of the Hodge filtration is a homomorphism of coherent sheaves $\eta: \calH \to \omega$ which is a left inverse of the natural embedding.
This is equivalent to giving an embedding $\omega^{-1} \cong \calH/\omega \to \calH$ that is a right inverse of the natural projection.
Given a splitting $\eta$, one may write $\calH = \omega \oplus \omega^{-1}$.
Then, the Gauss--Manin connection on $\calH$ can be written as
\[
\begin{array}{rcccc}
	\nabla &:& \calH = \omega \oplus \omega^{-1} & \longrightarrow & \calH \otimes \Omega^1\cong \omega^3 \oplus \omega \\
	&&(x,y) & \longmapsto & (\nabla_1(x) + \nabla_2(y), \nabla_3(x) + \nabla_4(y)).
\end{array}
\]

We now analyse each component  $\nabla_i$ of $\nabla$.
\begin{itemize}
\item
We put $\partial = \nabla_1$, it is 
a connection on $\omega$ given by
\[
\partial: \omega \hookrightarrow \calH \xrightarrow{\nabla} \calH \otimes \Omega^1
\xrightarrow{\eta \otimes \mathrm{id}}  \omega \otimes \Omega^1.
\]

For $k \in \ZZ$, let $\partial^{\otimes k}$ denote the induced connection on the $k$-th power $\omega^k$. (Negative powers are understood as duals.)
Alternatively, using the Kodaira--Spencer isomorphism, we may view $\partial^{\otimes k}$ as a first-order differential operator $\omega^k \to \omega^{k+2}$. Our first step later on will be to $p$-adically interpolate $\partial^{\otimes k}$.

\item
For $\nabla_2$, we observe that the first term of the equality $y \otimes \nabla(a) + a\nabla(y)=\nabla(ay)$ lies in $\omega\cong\omega^{-1}\otimes\Omega^1\subseteq\calH \otimes \Omega^1$.
Hence, $\nabla_2$ is in fact $\calO_X$-linear and so is given by multiplication by a section $\lambda$ of $\omega^4$ over $X$.

\item
For $\nabla_3$, we note that the identification between $\Omega^1$ with $\omega^2$ is given by the Kodaira--Spencer isomorphism.  Thus, $\nabla_3$ is simply the identity map by definition.

\item
For $\nabla_4$, we observe that $\wedge^2 \calH$ is the trivial line bundle with the trivial connection.
This implies that $\nabla_4$ is the connection $\partial^{\otimes(-1)}$ on $\omega^{-1}$.

\end{itemize}

\begin{example}
\label{E:katz splitting}
An example of a splitting of the Hodge filtration was given by Katz \cite[A1.2]{katz}; he constructed a so-called ``canonical splitting'' using the equation for the universal elliptic curve.

For $k \in \ZZ_{\geq1}$, we put $\sigma_k(n) = \sum_{d|n} d^k$.
Using this splitting, the action of the differential operator on a form $f$ of weight $k$ is given in terms of $q$-expansions by
\[
\partial(f) = q\frac{df}{dq} + \frac{kE_2f}{12} \quad\text{and}\quad \lambda = E_4 = 1+ 120 \sum_{n\in \NN} \sigma_3(n)q^n.
\]
where $E_2 = 1 - 24 \sum_{n \in \ZZ_{\geq1}} \sigma_1(n) q^n$ and $q = e^{2\pi iz}$.
Note that $E_2$ is a $p$-adic modular form, but not an overconvergent one \cite{E2}.
The modular form $E_4$ is an Eisenstein series.
\end{example}

\begin{lemma}
\label{L:change of splitting}
Let $\eta'$ be another splitting of the Hodge filtration.  Then $\eta' - \eta$ induces a natural homomorphism of coherent sheaves $\calH / \omega \cong \omega^{-1} \to \omega$, which is given by multiplication by some section $\alpha$ of $\omega^2$.
Let $\partial'$ and $\lambda'$ be the associated differential operator and section of $\omega^4$ defined as above with respect to the splitting $\eta'$.
Then, we have
\[
\partial' = \partial + \alpha \quad \textrm{and} \quad \lambda' = \lambda - \alpha^2 - \partial^{\otimes 2}(\alpha).
\]
\end{lemma}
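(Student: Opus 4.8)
The plan is to compute directly how each of the four components $\nabla_i$ of the Gauss--Manin connection changes when we pass from the splitting $\eta$ to $\eta'$. The key observation is that the new decomposition $\calH = \omega \oplus_{\eta'} \omega^{-1}$ differs from the old one only in how we embed the quotient $\omega^{-1} = \calH/\omega$ back into $\calH$: if $s\colon \omega^{-1}\to\calH$ and $s'\colon\omega^{-1}\to\calH$ are the two sections of the projection determined by $\eta$ and $\eta'$, then $s' - s$ lands in $\omega$ (since both are sections of the same projection), so $s' = s + \alpha$ where $\alpha\colon \omega^{-1}\to\omega$ is multiplication by a section of $\omega^{\otimes 2}$. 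Equivalently, on the level of the projections to $\omega$, one has $\eta' = \eta - \alpha\circ(\text{proj to }\calH/\omega)$, with a sign to be pinned down by the convention that $\eta,\eta'$ are left inverses of $\omega\hookrightarrow\calH$. Fixing this sign carefully is the one genuinely delicate bookkeeping point; everything else is a formal consequence.

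First I would compute $\partial' = \nabla'_1$. Unwinding the definition, $\partial'$ is the composite $\omega\hookrightarrow\calH\xrightarrow{\nabla}\calH\otimes\Omega^1\xrightarrow{\eta'\otimes\id}\omega\otimes\Omega^1$. Writing $\eta' = \eta + (\text{correction through }\calH/\omega)$ and using that, in the old splitting, $\nabla$ of a section of $\omega$ has $\omega^{-1}$-component equal to $\nabla_3 = \id$ (i.e.\ the identity $\Omega^1\cong\omega^{\otimes 2}$ of Kodaira--Spencer), the correction term contributes exactly $\alpha$ acting on that $\omega^{-1}$-component. Hence $\partial' = \partial + \alpha$, where $\alpha$ is read as the differential operator $\omega\to\omega\otimes\Omega^1\cong\omega^{\otimes 3}$, i.e.\ multiplication by the section $\alpha\in H^0(X,\omega^{\otimes 2})$. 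The sign of $\alpha$ in the statement is what fixes the earlier sign convention, so I would simply define $\alpha$ by this formula.

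Next I would compute $\lambda' = \nabla'_2$, the $\omega^{\otimes 3}$-component of $\nabla'(s'(y))$ for $y$ a section of $\omega^{-1}$. Substituting $s' = s + \alpha$ (with $\alpha$ now viewed as a section of $\omega$ over the locus in question) and expanding $\nabla(s'(y)) = \nabla(s(y)) + \nabla(\alpha y)$ by the Leibniz rule, one gets three kinds of terms: the old term $\nabla(s(y))$, whose $\omega$-part is $\lambda y$; the term $y\otimes\nabla(\alpha)$, whose relevant component is $\partial^{\otimes 2}(\alpha)\cdot y$ after identifying via Kodaira--Spencer (here one uses that $\alpha\in\omega^{\otimes 2}$ and $\partial^{\otimes 2}$ is the induced connection on $\omega^{\otimes 2}$); and the term $\alpha\cdot\nabla(y)$, whose $\omega$-component, by $\nabla_4 = \partial^{\otimes(-1)}$ and $\nabla_3 = \id$, produces an $\alpha^2$ contribution. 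Tracking the signs coming from how $s'$ sits inside $\calH$ versus $s$, and from the Leibniz rule against the section $s'$, yields $\lambda' = \lambda - \alpha^2 - \partial^{\otimes 2}(\alpha)$. One should also double-check the remaining two components as a consistency test: $\nabla'_3$ is again the Kodaira--Spencer identity (independent of the splitting, as it is just the image of $\omega$ in $\omega^{-1}\otimes\Omega^1$), and $\nabla'_4 = \partial'^{\otimes(-1)} = (\partial+\alpha)^{\otimes(-1)} = \partial^{\otimes(-1)} - \alpha$, which is consistent with the change in $\partial$; this falls out of the fact that $\wedge^2\calH$ is canonically trivial with trivial connection regardless of the splitting.

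The main obstacle is purely a matter of sign and normalization conventions: one must be completely consistent about (i) whether $\eta'-\eta$ is read as the map on $\omega^{-1}$ or its negative, (ii) the Kodaira--Spencer identification $\Omega^1\cong\omega^{\otimes 2}$ and the induced identifications $\omega^{-1}\otimes\Omega^1\cong\omega$ and $\omega\otimes\Omega^1\cong\omega^{\otimes 3}$, and (iii) the placement of the section inside $\calH\otimes\Omega^1$ when applying Leibniz. There is no hard analysis or geometry here — the computation is entirely formal, local on $X$, and linear in $\alpha$ except for the single quadratic term $\alpha^2$ coming from the $\alpha\cdot\nabla(\alpha y)$ cross-term. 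I would carry it out in a local frame where $\omega$ is trivialized, so that $\alpha,\lambda,\partial$ all become honest functions and operators on functions, reducing the whole statement to an elementary identity, and then note that the result is frame-independent because every term is a section of the stated power of $\omega$.
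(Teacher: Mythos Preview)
Your proposal is correct and follows essentially the same approach as the paper: a direct computation of how the components of $\nabla$ transform under the change of splitting. The paper packages this computation slightly more compactly by writing $\nabla$ as a $2\times 2$ matrix of operators $\begin{pmatrix}\partial & \lambda \\ 1 & \partial^{\otimes(-1)}\end{pmatrix}$ and conjugating by the change-of-basis matrix $\begin{pmatrix}1 & \alpha \\ 0 & 1\end{pmatrix}$, which recovers both formulas at once (using the operator identity $\partial\circ\alpha = \partial^{\otimes 2}(\alpha) + \alpha\,\partial^{\otimes(-1)}$); your component-by-component expansion via the Leibniz rule is exactly this matrix product unpacked entry by entry.
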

\begin{proof}
Looking at the change of basis matrix, we have
\[
\begin{pmatrix}
1 & \alpha \\ 0 & 1
\end{pmatrix}
\begin{pmatrix}
\partial & \lambda \\ 1 & \partial^{\otimes(-1)}
\end{pmatrix}
\begin{pmatrix}
1 & -\alpha \\ 0 & 1
\end{pmatrix}
 = \begin{pmatrix}
\partial' & \lambda' \\ 1 & \partial'^{\otimes(-1)}
\end{pmatrix}.
\]
From this, it is clear that $\partial' = \partial + \alpha$.
Multiplying out the matrix product, we have
\[
\lambda' = \lambda + \alpha \partial ^{\otimes(-1)} + (\partial + \alpha)\circ (-\alpha) = \lambda - \alpha^2 +  \alpha \partial ^{\otimes(-1)} - \partial \circ \alpha.
\]
Noting that $\partial \circ \alpha = \partial^{\otimes 2}(\alpha) + \alpha \partial^{\otimes(-1)}$,
the second formula of the lemma then follows.
\end{proof}

The following lemma is essentially \cite[A1.4]{katz}; it is a simple computation of tensors.

\begin{lemma}[Katz]
\label{L:katz}
For integers $k\geq1$ and $r \in [0, k]$, the chosen splitting $\eta$ induces an isomorphism $\omega^{k-r} \otimes \Sym^r \calH \cong \oplus_{a = 0}^{r} \omega^{k-2a}$.  Under this identification, the Gauss--Manin connection $\omega^{k-r}\otimes\Sym^r \calH \to \omega^{k-r+1} \otimes \Sym^{r+1}\calH$ is given by the sum over the maps
\[
\xymatrix@R=0pt@C=7pt{
\nabla^{k,r}:&
\omega^{k-2a} \ar[rr] && \omega^{k-2a} \ar@{}[r]|\bigoplus & \omega^{k-2a+2} \ar@{}[r]|\bigoplus &  \omega^{k-2a+4}\\
&
f \ar@{|->}[rr] && \big( \ (k-a)f, & \partial^{\otimes(k-2a)}(f), & a\lambda f \ \big).
}
\]
\end{lemma}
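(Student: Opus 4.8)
The plan is to fix the splitting $\eta$, reduce to a computation over a Zariski open where $\omega$ is trivial, and then read off the three components.

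\emph{Step 1: the decomposition.} Writing $\calH = \omega \oplus \omega^{-1}$ via $\eta$, we get $\Sym^r\calH = \bigoplus_{j=0}^{r}\Sym^j\omega\otimes\Sym^{r-j}\omega^{-1} = \bigoplus_{j=0}^{r}\omega^{2j-r}$; putting $a = r-j$ and tensoring with $\omega^{k-r}$ gives the asserted isomorphism $\omega^{k-r}\otimes\Sym^r\calH\cong\bigoplus_{a=0}^{r}\omega^{k-2a}$. The identical computation with $r+1$ in place of $r$ identifies $\omega^{k-r+1}\otimes\Sym^{r+1}\calH$ with $\bigoplus_{a=0}^{r+1}\omega^{k+2-2a}$, and under this the three target summands $\omega^{k-2a}$, $\omega^{k-2a+2}$, $\omega^{k-2a+4}$ of the statement are the pieces indexed by $a+1$, $a$, $a-1$; in particular the last is absent for $a=0$, consistently with the coefficient $a$ in front of $\lambda f$.

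\emph{Step 2: the local computation.} Over a Zariski open where $\omega$ is free, pick a generator $\bfw$ of $\omega$, let $\bfw'\in\calH$ be the image of $\bfw^{-1}$ under the splitting section $\omega^{-1}\hookrightarrow\calH$, and use the Kodaira--Spencer isomorphism to identify $\Omega^1$ with $\omega^{\otimes2}$, with frame $\bfw^{\otimes2}$. Write $\partial(\bfw) = \theta\,\bfw\otimes\bfw^{\otimes2}$ and $\lambda = \ell\,\bfw^{\otimes4}$ for local functions $\theta,\ell$. From the explicit forms of $\nabla_1=\partial$, $\nabla_2$ (multiplication by $\lambda$), $\nabla_3=\id$, $\nabla_4=\partial^{\otimes(-1)}$ recalled above, one computes $\nabla(\bfw) = (\theta\bfw+\bfw')\otimes\bfw^{\otimes2}$ and $\nabla(\bfw') = (\ell\bfw-\theta\bfw')\otimes\bfw^{\otimes2}$. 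The induced connection $\nabla^k$ on $\Sym^k\calH$ obeys the Leibniz rule, so
\[
\nabla^k\!\left(\bfw^{\otimes(k-a)}(\bfw')^{\otimes a}\right) = \Big((k-2a)\theta\,\bfw^{\otimes(k-a)}(\bfw')^{\otimes a} + (k-a)\,\bfw^{\otimes(k-a-1)}(\bfw')^{\otimes(a+1)} + a\ell\,\bfw^{\otimes(k-a+1)}(\bfw')^{\otimes(a-1)}\Big)\otimes\bfw^{\otimes2}.
\]
Since $a\le r$, all three monomials lie in $\Fil^{k-r-1}\Sym^k\calH$, so Griffiths transversality holds automatically here and the output lies in $\Fil^{k-r-1}\Sym^k\calH\otimes\Omega^1\cong\omega^{k-r+1}\otimes\Sym^{r+1}\calH$. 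Now let $f = g\,\bfw^{\otimes(k-2a)}$ be a general section of the source summand $\omega^{k-2a}$; under Step 1 it corresponds to $g\,\bfw^{\otimes(k-a)}(\bfw')^{\otimes a}$, so adjoining the Leibniz term $\bfw^{\otimes(k-a)}(\bfw')^{\otimes a}\otimes dg$ upgrades the first coefficient from $(k-2a)\theta g$ to $\partial^{\otimes(k-2a)}(f)$ (viewing $\partial^{\otimes(k-2a)}$ as the operator $\omega^{k-2a}\to\omega^{k-2a+2}$ via Kodaira--Spencer), while the other two coefficients are simply multiplied by $g$. Translating the three monomials back through Step 1 and through $\Omega^1\cong\omega^{\otimes2}$, these three contributions become $(k-a)f$, $\partial^{\otimes(k-2a)}(f)$, and $a\lambda f$, landing in $\omega^{k-2a}$, $\omega^{k-2a+2}$, $\omega^{k-2a+4}$ respectively. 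Since these formulas and the two direct-sum decompositions are intrinsic once $\eta$ is fixed, the local identity is global, which proves the lemma.

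\emph{Where the work is.} There is no genuine obstacle---this is Katz's ``simple computation of tensors''---but care is needed to juggle the three identifications in play ($\calH=\omega\oplus\omega^{-1}$ from $\eta$; the Hodge filtration $\Fil^{k-r}\Sym^k\calH\cong\omega^{k-r}\otimes\Sym^r\calH$; Kodaira--Spencer $\Omega^1\cong\omega^{\otimes2}$) and to keep track of how the index $a$ shifts under $\Fil^{k-r}\to\Fil^{k-r-1}\otimes\Omega^1$: the $\bfw'$-part of $\nabla(\bfw)$ sends $a\mapsto a+1$, the $\partial$-part preserves $a$ up to the $\Omega^1$-twist, and the $\lambda$-part sends $a\mapsto a-1$; one then checks the edge cases $a=0$ (no $\lambda$-term) and $a=r$ (the $(k-a)f$-term legitimately lands in the bottom summand $\omega^{k-2r}$). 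One should also note that invoking the Leibniz rule for $\nabla^k$ is legitimate precisely because $\nabla^{k,r}$ is \emph{defined} as the restriction of $\nabla^k$ to $\Fil^{k-r}\Sym^k\calH$ followed by the identifications above.
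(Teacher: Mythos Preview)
Your proof is correct and is exactly the ``simple computation of tensors'' the paper defers to \cite[A1.4]{katz}; the paper does not actually write out a proof, so you have filled in the details along the intended lines. Your Step~2 local computation, in particular the Leibniz expansion of $\nabla^k(\bfw^{\otimes(k-a)}(\bfw')^{\otimes a})$ and the bookkeeping of how the three monomials land in the summands indexed by $a+1$, $a$, $a-1$ of the target decomposition, is precisely what is needed.
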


From this lemma, we see that the $p$-adic interpolation of the Gauss--Manin connection will essentially follow from the $p$-adic interpolation of the connections on ``$p$-adic powers" of $\omega$.

\subsection{Connection on $M^\dagger(\chi_A)$}
We will use the interplay between connections on vector bundles and connections on principal bundles to construct the $p$-adic interpolation of the Gauss--Manin connection.

Let $\pi_{T^\times}:T^\times \to X$ denote the natural projection.
Recall that, after choosing a splitting of the Hodge filtration, we have a connection $\partial: \omega \to \omega \otimes_{\calO_X} \Omega^1$.
This naturally induces a connection
\begin{equation}
\label{E:definition of tilde partial}
\xymatrix@R=5pt@C=5pt{
\quad\quad \tilde \partial: & \pi_{T^\times, *} \calO_{T^\times} \ar@{=}[d] \ar@{-->}[rrr] &&&\big(\pi_{T^\times, *} \calO_{T^\times}\big)\otimes _{\calO_X} \Omega^1 \ar@{=}[d]
\\
\displaystyle\bigoplus_{k \in \ZZ} \partial^{\otimes k}: & \displaystyle\bigoplus_{k \in \ZZ} \omega^k  \ar[rrr] & && \big(\displaystyle\bigoplus_{k \in \ZZ} \omega^k \big) \otimes _{\calO_X} \Omega^1.
}
\end{equation}
This may be viewed as a connection  on the $\GG_m$-torsor $T^\times$ over $X$.  Hence the connection is equivalent to a $\GG_m \times X$-equivariant $\calO_{T^\times}$-linear map
\begin{equation}
\label{E:connection-Gm-bundle}
\partial_{T^\times}:\ \Omega^1_{T^\times/\Zp}(\log \pi_{T^\times}^{-1}(C)) \longrightarrow \pi_{T^\times}^*\Omega^1
\end{equation}
which splits the natural map $\pi_{T^\times}^* \Omega^1 \hookrightarrow \Omega^1_{T^\times/\Zp}(\log \pi_{T^\times}^{-1}(C))$.

We may first take the analytification of $T^\times_{\QQ_p}$ as well as the map \eqref{E:connection-Gm-bundle} (tensored with $\QQ_p$), and then restrict the map to $F_n^\times$.  We thus obtain an $\calO_{G_n}$-equivariant (because $G_n$ is a subgroup of $\GG_{m, \rig} \times X(v)$) $\calO_{F^\times_n}$-linear map
\[
\partial_{F_n^\times}:\
\Omega_{F^\times_n/\Qp}^1(\log \pr_n^{-1}(C)) \to \pr_n^*\Omega^1_{X(v)/\QQ_p}(\log C)
\]
which splits the natural map $\pr_n^*\Omega^1_{X(v)/\QQ_p}(\log C) \hookrightarrow \Omega_{F^\times_n/\Qp}^1(\log \pr_n^{-1}(C))$.
This gives the connection on the $G_n \times X(v)$-torsor $F^\times_n$ over $X(v)$.  In particular, 
pre-composing $\partial_{F^\times_n}$ with the natural map $\calO_{F^\times_n} \xrightarrow{d} \Omega_{F^\times_n/\Qp}^1(\log \pr_n^{-1}(C))$ and then pushing forward along $\pr_n$ induces a natural $\calO_{G_n}$-equivariant map
\begin{equation}
\label{E:connection-G_n-bundle}
\tilde \partial_{F_n^\times}:\
\pr_{n, *}\calO_{F^\times_n} \longrightarrow
\pr_{n, *}\calO_{F^\times_n} \otimes_{\calO_{X(v)}} \Omega^1_{X(v)/\QQ_p}(\log C).
\end{equation}

Now, let $A$ be an affinoid algebra and $\chi_A:\ZZ_p^\times \to A^\times$ a locally analytic character as in \S~\ref{S:locally analytic characters}.  We may tensor \eqref{E:connection-G_n-bundle} with $\calV$ and then take the $G_n$-invariant sections.  This gives a natural connection
\[
\begin{array}{lcccll}
	\partial^{\chi_A} &:&\boldsymbol \omega^{\chi_A}&=&\multicolumn{2}{l}{\big((\mathrm{pr}_n \times \id)_* \calO_{F_n^\times \times \Spm A} \otimes_{\calO_{X(v) \times \Spm A}} \calV \big)^{G_n}} \\
					&&&& \xrightarrow{\tilde \partial_{F_n^\times}}& \big((\mathrm{pr}_n \times \id)_* \calO_{F_n^\times \times
						\Spm A} \otimes_{\calO_{X(v)}} \Omega^1_{X(v)/\QQ_p}(\log C)\otimes_{\calO_{X(v) \times \Spm A}} \calV \big)^{G_n} \\
					&&&&& \quad = \boldsymbol \omega^{\chi_A} \otimes_{\calO_{X(v)}} \Omega^1_{X(v)/\QQ_p}(\log C).
\end{array}
\]

Taking global sections over $X(v) \times\Spm(A)$ induces a natural map
\[
\partial^{\chi_A}: M^{\dagger, v}(\chi_A) \to M^{\dagger, v}(\chi_A) \otimes_{\Gamma(X(v), \calO_{X(v)})} \Gamma(X(v), \Omega^1_{X(v)/\QQ_p}(\log C)) \cong M^{\dagger, v}(\chi_A \chi_\cycl^2),
\]
and $\partial^{\chi_A}: M^\dagger(\chi_A) \to M^\dagger(\chi_A\chi_\cycl^2)$.

When the character $\chi_A$ is $\chi_\cycl^k$, it is easy to see that $\partial^{\chi_\cycl^k}$ is the restriction of $\partial^{\otimes k}$ on $\omega^k$ to $X(v)$.

\begin{lemma}
\label{L:change of splitting family}
If we change the splitting of Hodge filtration as in Lemma~\ref{L:change of splitting}, then the corresponding family of connections $\partial^{\chi_A}$ and $\partial'^{\chi_A}$ are related by
\[
\partial'^{\chi_A} = \partial^{\chi_A} + \WT(\chi_A) \cdot \alpha.
\]
\end{lemma}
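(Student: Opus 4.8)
The plan is to reduce the family statement to the classical change-of-splitting formula from Lemma~\ref{L:change of splitting} by working on the level of principal bundles. Recall that $\partial^{\chi_A}$ was constructed from the connection $\partial$ on $\omega$ purely formally: one first forms the connection $\tilde\partial$ on $\pi_{T^\times,*}\calO_{T^\times} = \bigoplus_k \omega^k$, reinterprets it as the splitting $\partial_{T^\times}$ of $\pi_{T^\times}^*\Omega^1 \hookrightarrow \Omega^1_{T^\times/\Zp}(\log\,\cdot\,)$, restricts to $F_n^\times$, and then tensors with $\calV$ and takes $G_n$-invariants. Since every step after the choice of $\partial$ is functorial in $\partial$, it suffices to understand how the input connection $\tilde\partial$ on the $\GG_m$-torsor $T^\times$ changes, and then to check that the correction term survives each of the functorial operations with the expected twist.

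First I would record that, by Lemma~\ref{L:change of splitting}, the two splittings give $\partial' = \partial + \alpha$ on $\omega$, and hence on $\omega^k$ the induced connections satisfy $\partial'^{\otimes k} = \partial^{\otimes k} + k\alpha$; here $\alpha \in \Gamma(X,\omega^2)$ acts via the Kodaira--Spencer identification $\omega^2\cong\Omega^1$. Thus on $\pi_{T^\times,*}\calO_{T^\times} = \bigoplus_k \omega^k$ the difference $\tilde\partial' - \tilde\partial$ is the $\calO_X$-linear operator that multiplies the degree-$k$ piece by $k\alpha$. Equivalently, on the $\GG_m$-torsor $T^\times$, the two principal connections differ by the $\GG_m$-invariant $\Omega^1$-valued vertical vector field given by $\alpha$ times the Euler (weight) vector field generating the $\GG_m$-action. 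This is the key reformulation: the correction is precisely $\alpha$ times the infinitesimal generator of the torsor group.

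Next I would transport this along the three remaining operations. Analytifying and restricting to the sub-torsor $F_n^\times \hookrightarrow T_\rig^\times$ for the group $G_n$: since $G_n$ sits between $\ZZ_p^\times$ and $\GG_{m,\rig}$, its Lie algebra is the same one-dimensional space, and the restriction of the Euler vector field is the infinitesimal generator of $G_n$; so $\partial'_{F_n^\times} - \partial_{F_n^\times}$ is again $\alpha$ times this generator. Then tensoring with the line bundle $\calV$ attached to $\chi_A^{\mathrm{an}}$ and taking $G_n$-invariants: on $G_n$-invariant sections, the action of the infinitesimal generator of $G_n$ is by the derivative of the character $\chi_A^{\mathrm{an}}$ at the identity, which is exactly $\WT(\chi_A)$ by the definition $\WT(\chi_A) = \lim_{a\to1}\log(\chi_A(a))/\log a$. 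Hence the correction term becomes multiplication by $\WT(\chi_A)\cdot\alpha$, viewed as a global section of $\Omega^1_{X(v)}(\log C)$ via Kodaira--Spencer, which is precisely the asserted formula $\partial'^{\chi_A} = \partial^{\chi_A} + \WT(\chi_A)\cdot\alpha$.

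The main obstacle, and the step needing the most care, is making precise the claim that "the action of the infinitesimal generator of $G_n$ on $\calV$-valued $G_n$-invariants is multiplication by $\WT(\chi_A)$." Concretely one must unwind the definition of $\boldsymbol\omega^{\chi_A}$ as $G_n$-invariants in $(\pr_n\times\id)_*\calO_{F_n^\times\times\Spm A}\otimes\calV$, differentiate the $G_n$-equivariance relation along the one-parameter direction picked out by $\alpha$, and match the resulting scalar with $\WT(\chi_A)$ using the explicit formula $\chi_A^{\mathrm{an}}(x z) = \chi_A(x)\lambda_{A,n-1}^{\log(z)/p^{n-1}}$; the point is that $\tfrac{d}{dt}\big|_{t=0}\chi_A^{\mathrm{an}}(e^{t}) = \log\lambda_{A,n-1}/p^{n-1} = \WT(\chi_A)$. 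Once this local computation is pinned down, everything else is the bookkeeping of propagating an $\calO$-linear correction term through pushforward, tensor product, and taking invariants, all of which are routine. It is also worth remarking that consistency with the case $\chi_A = \chi_\cycl^k$ is automatic: there $\WT(\chi_\cycl^k) = k$, and we recover $\partial'^{\otimes k} = \partial^{\otimes k} + k\alpha$ on $\omega^k$, in agreement with Lemma~\ref{L:change of splitting}.
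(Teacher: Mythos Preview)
Your proposal is correct and follows essentially the same route as the paper's ``down-to-earth'' proof: interpret $\tilde\partial'-\tilde\partial$ as $\alpha$ times the Lie-algebra generator of the torsor group, propagate this through analytification and restriction to $F_n^\times$, and then observe that on $G_n$-invariant sections of $\calV$ the generator acts by the derivative of $\chi_A^{\mathrm{an}}$ at the identity, i.e.\ by $\WT(\chi_A)$. The paper states this last identification in one sentence, whereas you spell out the computation $\tfrac{d}{dt}\big|_{t=0}\chi_A^{\mathrm{an}}(e^{t}) = \log\lambda_{A,n-1}/p^{n-1} = \WT(\chi_A)$ explicitly; the paper also mentions, as an alternative you only use as a sanity check, that one could instead argue by density of classical weights.
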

\begin{proof}
This can be proved by specializing to classical characters as in Proposition~\ref{P:independence of splitting} below.
But we prefer a down-to-earth proof which we hope will give the reader a better intuition for the construction.

Let $\eta'$ be another splitting of Hodge filtration and let $\alpha$ be as in Lemma~\ref{L:change of splitting}.
Let $\partial'$, $\tilde \partial'$, $\partial'_{T^\times}$, $\partial'_{F^\times_n}$, and $\tilde \partial'_{F^\times_n}$ be the differential operators or connections constructed using $\eta'$ in place of $\eta$.
We analyse their difference with the original one.

By Lemma~\ref{L:change of splitting}, $\partial' -\partial = \alpha$ can be viewed as a section of $\omega^2 \cong \Omega^1$.
Then $\tilde \partial' - \tilde \partial$ is multiplication by $k \alpha$ on the direct summand $\omega^k$.
In other words, the action of $\tilde \partial' - \tilde \partial$ is given by the action of the Lie algebra of $\GG_m$ multiplied with $\alpha$.
It is clear that this property continues to hold for the differences of the other differential operators.  In particular, $\partial'^{\chi_A} - \partial^{\chi_A}$ is given by the action of the Lie algebra multiplied by $\alpha$.  This is the statement of the lemma.
\end{proof}

\subsection{The family of Gauss--Manin connections on $M^\dagger_r(\chi_A)$}
\label{S:family GM connection}

Keep $\chi_A$ and $v$ as above.
Recall that the character $\chi_A$ gives rise to a weight function $\WT = \WT(\chi_A)$ on $\Spm(A)$.
Following Lemma~\ref{L:katz},
we define the \emph{family of nearly overconvergent Gauss--Manin connections} to be
\[
\nabla^{\chi_A, r}: M^{\dagger, v}_r (\chi_A) = 
\bigoplus_{a = 0}^r M^{\dagger,v}(\chi_A \chi_\cycl^{-2a}) \longrightarrow 
M^{\dagger, v}_{r+1} (\chi_A\chi_\cycl^2) = 
\bigoplus_{a = 0}^{r+1} M^{\dagger,v}(\chi_A \chi_\cycl^{2-2a})
\]
given by sending $f \in M^{\dagger,v}(\chi_A \chi_\cycl^{-2a})$ to
\begin{equation}
\label{E:definition of nabla^chi_A r}
(\ (\WT - a)f,\ \partial^{\chi_A\chi_\cycl^{-2a}}(f), \
a \lambda f\ ) \in M^{\dagger,v}(\chi_A \chi_\cycl^{-2a}) \oplus M^{\dagger,v}(\chi_A \chi_\cycl^{2-2a}) \oplus M^{\dagger,v}(\chi_A \chi_\cycl^{4-2a}).
\end{equation}
Taking $v \to 0^+$, this defines $\nabla^{\chi_A, r}: M^{\dagger}_r (\chi_A)
\to M^{\dagger}_{r+1} (\chi_A\chi_\cycl^2).
$
When $\chi_A = \epsilon\chi_\cycl^k$ is a classical character and $k \geq 2r+2$, the Gauss--Manin connection  $M^\dagger_{k,r}(N, \epsilon_N) \to M^\dagger_{k+2, r+1}(N, \epsilon_N)$ is compatible with the algebraic Gauss--Manin connection $\nabla_{k,r}: M_{k,r}(N, \epsilon_N) \to M_{k+2, r+1}(N, \epsilon_N)$.

\begin{proposition}
\label{P:independence of splitting}
The family of nearly overconvergent Gauss--Manin connections $\nabla^{\chi_A, r}$ defined above does not depend on the choice of the splitting $\eta$ of the Hodge filtration.
\end{proposition}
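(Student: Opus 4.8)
The plan is to reduce the statement about families to the known independence of the classical Gauss--Manin connection from the splitting, using a Zariski-density argument in the weight space $\calW$. The key point is that $\nabla^{\chi_A, r}$ is built out of three ingredients: the weight function $\WT(\chi_A)$, the family connection $\partial^{\chi_A\chi_\cycl^{-2a}}$, and multiplication by $\lambda$; and only the middle one and $\lambda$ depend on the splitting $\eta$. So first I would compute, for a second splitting $\eta'$ with associated section $\alpha \in \Gamma(X, \omega^2)$ as in Lemma~\ref{L:change of splitting}, the difference $\nabla'^{\chi_A, r} - \nabla^{\chi_A, r}$ purely formally. By Lemma~\ref{L:change of splitting family} we have $\partial'^{\chi_A\chi_\cycl^{-2a}} = \partial^{\chi_A\chi_\cycl^{-2a}} + \WT(\chi_A\chi_\cycl^{-2a})\cdot\alpha = \partial^{\chi_A\chi_\cycl^{-2a}} + (\WT(\chi_A) - 2a)\cdot\alpha$, and by Lemma~\ref{L:change of splitting} $\lambda' = \lambda - \alpha^2 - \partial^{\otimes 2}(\alpha)$ (which, crucially, is \emph{independent} of $\chi_A$ — it is a global section of $\omega^4$ on $X$).

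Next I would assemble these into a single map. Using the identification $M^{\dagger,v}_r(\chi_A) \cong \bigoplus_{a=0}^r M^{\dagger,v}(\chi_A\chi_\cycl^{-2a})$ afforded by $\eta$ (and the analogous one by $\eta'$), the two connections $\nabla'^{\chi_A,r}$ and $\nabla^{\chi_A,r}$ are genuinely the same map on $M^{\dagger,v}_r(\chi_A)$ written in two different direct-sum decompositions; the change-of-basis between the decompositions is the family analogue of the unipotent matrix in Lemma~\ref{L:change of splitting}, namely the automorphism of $\boldsymbol\omega^{\chi_A\chi_\cycl^{-2a}}\otimes\Sym^r\calH$ induced by $\begin{pmatrix} 1 & \alpha \\ 0 & 1\end{pmatrix}$ acting on $\calH = \omega\oplus\omega^{-1}$. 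So the real claim is a compatibility of $\nabla^{\chi_A,r}$ with this change of basis, i.e. an identity between two explicit $A$-linear maps $\bigoplus_{a=0}^r M^{\dagger,v}(\chi_A\chi_\cycl^{-2a}) \to \bigoplus_{a=0}^{r+1} M^{\dagger,v}(\chi_A\chi_\cycl^{2-2a})$ whose entries are $A$-linear combinations (with coefficients polynomial in $\WT(\chi_A)$, hence rigid-analytic functions on $\Spm(A)$) of the operations $\mathrm{id}$, $\partial^{\chi_A\chi_\cycl^{\bullet}}$, multiplication by $\alpha$, multiplication by $\lambda$, and multiplication by $\partial^{\otimes 2}(\alpha)$.

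Finally, to verify that identity I would invoke Zariski density: both sides are $A$-linear maps of Banach $A$-modules depending rigid-analytically on the weight (the coefficients are polynomials in $\WT(\chi_A)$, and $\partial^{\chi_A\chi_\cycl^{\bullet}}$ interpolates, by construction in \S\ref{S:family GM connection}, the operators $\partial^{\otimes k}$ at classical points), so it suffices to check the identity after specializing at a Zariski-dense set of points of $\Spm(A)$. Shrinking $\Spm(A)$ if necessary, we may assume it contains infinitely many classical characters $\epsilon\chi_\cycl^k$ with $k \geq 2r+2$, and these are dense. At such a point, $\nabla^{\chi_A,r}$ specializes (as recorded at the end of \S\ref{S:family GM connection}) to the classical Gauss--Manin connection $\nabla^{k,r}$ of \S\ref{S:GM connection}, which is intrinsically defined via Griffiths transversality on $\Sym^k\calH$ and manifestly does not involve any choice of splitting; hence the two specialized maps agree. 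Therefore the two family maps agree on a Zariski-dense set, hence everywhere, proving independence of $\eta$.

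\textbf{Main obstacle.} The delicate point is not the density argument itself but setting up the bookkeeping so that "the same map in two decompositions" is literally correct: one must check that the family change-of-basis isomorphism $M^{\dagger,v}_r(\chi_A) \cong \bigoplus_a M^{\dagger,v}(\chi_A\chi_\cycl^{-2a})$ attached to $\eta'$ differs from the one attached to $\eta$ by the automorphism induced by $\alpha$ — i.e. that Lemma~\ref{L:katz}'s identification is natural in the splitting — and that this automorphism is compatible with passing to the overconvergent sheaves $\boldsymbol\omega^{\chi_A\chi_\cycl^{-2a}}$ (it is, since $\alpha$ is an honest section of $\omega^2$ on all of $X$, so it acts on everything in sight). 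Once this is in place, and once one notes that all weight-dependence is through the \emph{polynomial} $\WT(\chi_A)$ so that specialization at classical points is legitimate, the rest is the formal computation already carried out in Lemmas~\ref{L:change of splitting}, \ref{L:change of splitting family}, and \ref{L:katz} together with the classical fact that $\nabla^{k,r}$ is splitting-independent.
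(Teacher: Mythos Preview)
Your proposal is correct and matches the paper's own proof, which offers exactly the two routes you describe: a direct matrix computation via Lemmas~\ref{L:change of splitting} and~\ref{L:change of splitting family} (which you set up but do not carry out), and the Zariski-density reduction to classical weights $\chi_\cycl^k$ with $k\geq 2r+2$ where $\nabla^{k,r}$ is intrinsically defined via Griffiths transversality. The one point the paper makes explicit that you leave informal is the justification that specialization at a Zariski-dense set of weights determines the operator: the paper invokes that $M^{\dagger,v}_r(\chi_A)$ is potentially orthonormalizable in the sense of Buzzard~\cite{buzzard}, which is what underwrites your phrase ``depending rigid-analytically on the weight''.
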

\begin{proof}
We can check this by hand using Lemmas~\ref{L:change of splitting} and \ref{L:change of splitting family} and the expression \eqref{E:definition of nabla^chi_A r}.
This amounts to checking a matrix equality.
We leave the details to the interested reader.

Alternatively, we can check the independence using an abstract argument as follows.
Note that when $\chi_A = \chi_\cycl^k$ for an integer $k \geq 2r+2$, $\nabla^{\chi_A, r}$ is the same as the Gauss--Manin connection $\nabla^{k,r}$ and hence is independent of the choice of splitting of the Hodge filtration.
In general, using the functoriality of the construction, we may assume that $\Spm(A) $ is a geometrically connected subdomain of $\calW$ containing infinitely many characters of the form above.
Since $M^{\dagger,v}_r(\chi_A)$ is potentially orthogonalizable in the sense of Buzzard \cite{buzzard}, the operator $\nabla^{\chi_A, r}$ is determined by its specializations to these classical characters. Hence $\nabla^{\chi_A,r}$ is independent of the choice of splitting of the Hodge filtration.
\end{proof}

\begin{remark}
It is clear from the construction that the family of nearly overconvergent Gauss--Manin connections $\nabla^{\chi_A,r}$ commutes with the action of Hecke operators.
\end{remark}

\subsection{$q$-expansions}
Neither \cite{AIS} nor \cite{pilloni} elaborated on the $q$-expansion attached to overconvergent modular forms in an explicit way.  We include a short discussion for completeness.

Consider the Tate curve $E=\Tate(q)$ over $\ZZ_p((q))$; it comes equipped with a canonical differential $\omega_\can = \frac{dt}{t}$ and its d.s.k ``dual" $\eta_\can$ as defined in \cite[A1.3.14]{katz}.

The Tate curve admits a canonical subgroup $C_n = \mu_{p^n} \subseteq E[p^n]$.  Its dual is canonically isomorphic to $\ZZ/p^n\ZZ$ over $\Zp\llbracket q\rrbracket$.  The Hodge--Tate map for the Tate curve is 
\[
\HT:
C_n^D \cong (\ZZ/p^n\ZZ)_{\Zp((q))} \to \Omega^1_{C_n/ \Zp((q))} \cong \Omega^1_{E/\Zp((q))} \otimes_{\calO_E} \calO_{C_n}
= \calO_{C_n}\cdot\omega_\can.
\]
It sends the canonical generator $1\in \ZZ/p^n\ZZ$ to $\omega_\can$.
Moreover, this isomorphism extends to an isomorphism over $\Zp\llbracket q\rrbracket$.
Let $R= \Qp\langle p^{-1} q\rangle$ be the ring of analytic functions on the disk at a cusp of radius $p^{-1}$;\footnote{We could have worked with a larger disk or even with $\Zp\llbracket q\rrbracket [\frac 1p]$, but the latter does not fit into the language of rigid analytic space.} so we view $\Spm(R)$ as an affinoid subdomain of $X(v)$ for any $v$.  We put $z = \omega_\can$ so that $T^\times  \times_X \Spec R = \Spec R[z, z^{-1}]$.
The description of the Hodge--Tate map implies that
\[
F^\times_n \times_{X(v)} \Spm R \cong \coprod_{m \in\ZZ/p^n\ZZ} B_R(x_m, p^{-n}),
\]
where $B_R(x_m, p^{-n})$ denotes the relative disk over $R$ around $z=x_m$ of radius $p^{-n}$.

Now, let $\chi_A: \ZZ_p^\times \to A^\times$ be a continuous character.  Then for some $n \geq 3$, we have $|\chi_A(\exp(p^{n-1})) - 1|< p^{-1/(p-1)}$.  We put $\lambda = \chi_A(\exp(p^n)) \in A^\times$.  It follows that we can construct the overconvergent sheaf using this $n$.  Let $G_{n,R}^\circ$ denote the identity component of $G_{n,R}: = G_n \times_{X(v)} \Spm(R)$ and let $F^{\times, \circ}_{n,R}$ be the connected component of $F^\times_{n,R}: = F^\times_n \times_{X(v)} \Spm(R)$ that contains $\omega_\can$. 
We consider the following section of $\calV \otimes_{\calO_{X(v) \times \Spm A}}\calO_{F^{\times, \circ}_{n,R}}$
\[
z^{\log(\lambda)/ p^n} = 1+ \sum_{i = 1}^\infty (z-1)^{i} \otimes \binom{\log(\lambda)/ p^n}i.
\]
It is $G_{n,R}^\circ$-equivariant in the sense that, for $g \in 1+p^n \Zp\llbracket q\rrbracket$,
\[
g(z^{\log(\lambda)/ p^n}) = (g^{-1}z)^{\log(\lambda)/ p^n}\chi(g) = z^{\log(\lambda)/p^n} \chi(g) g^{-\log(\lambda)/p^n} = z^{\log(\lambda)/p^n}.
\]

Using the action of $G_{n,R}$, we can extend this section to a $G_{n,R}$-equivariant section of $\calV \otimes_{X(v) \times \Spm(A)}\calO_{F^{\times}_{n,R}}$.  In other words, we have constructed an explicit section $\omega_\can^{\chi_A}$ of $\boldsymbol \omega^{\chi_A}$ over $\Spm(R \widehat \otimes A)$.

Thus, for any nearly overconvergent modular form $f \in M^{\dagger,v}_r(\chi_A)$, we may evaluate $f$ on this Tate curve and write 
\[
f = \bff_0(q) \omega_\can^{\chi_A} \omega_\can^r +\bff_1(q) \omega_\can^{\chi_A} \omega_\can^{r-1}\eta_\can + \cdots +\bff_r(q) \omega_\can^{\chi_A} \eta_\can^r
\]
 for some $\bff_0, \dots, \bff_r \in A\llbracket q\rrbracket$.   This gives a natural morphism
\begin{align}
\label{E:q-expansion map}
M^{\dagger, v}_r(\chi_A) &\longrightarrow A\llbracket q\rrbracket[Y]
\\
\nonumber
f & \longmapsto \bff_0(q) + \bff_1(q)Y + \cdots + \bff_r Y^r.
\end{align}
functorial in the character $\chi_A$. When $\chi_A$ is a classical character and $r=0$, the map $f \mapsto \bff_0(q)$ recovers the $q$-expansion map of overconvergent modular forms.  
When the modular curve $X$ is geometrically connected, as in the proof of usual $q$-expansion principle, $\Spec(R)$ has Zariski dense image in $X_{\QQ_p}$; so passing to the $q$-expansion \eqref{E:q-expansion map} is equivalent to taking completion at the corresponding cusps and is hence injective.
Thus, one often uses the $q$-expansion to indicate the corresponding nearly overconvergent modular form.

\begin{example}
Assume that we choose the splitting of the Hodge filtration to be the one given by Katz as in Example~\ref{E:katz splitting}.
In terms of $q$-expansions, $\partial^{\chi_A}: M^\dagger(\chi_A) \to M^\dagger(\chi_A\chi_\cycl^2)$ is then given by
\[
\bff \mapsto q\frac d{dq}\bff + \frac {\WT(\chi_A)E_2 \bff}{12}.
\]
\end{example}

\vspace{10pt}
\noindent{\footnotesize (Robert Harron) \textsc{Department of Mathematics, Van Vleck Hall, University of Wisconsin--Madison, Madison, WI 53706, USA.} \textit{As of August 2014:} \textsc{Department of Mathematics, Keller Hall, University of Hawai`i at M\={a}noa, Honolulu, HI 96822, USA}\\
\textit{Email address:} \texttt{rharron{@}hawaii.edu}
\\
\\
{\footnotesize (Liang Xiao) \textsc{Department of Mathematics, 340 Rowland Hall, University of California, Irvine, Irvine, CA 92697, USA.} \textit{As of August 2014:} \textsc{Department of Mathematics, Mathematical Sciences Building, University of Connecticut, Storrs, Storrs, CT 06269, USA}\\
\textit{Email address:} \texttt{liang.xiao{@}uconn.edu}

\begin{thebibliography}{9999}

\bibitem[AIP]{AIP}
F. Andreatta, A. Iovita, and V. Pilloni,
$p$-adic families of Siegel modular cuspforms, {\it to appear in Ann.\ of Math.}

\bibitem[AIS]{AIS}
F. Andreatta, A. Iovita, and G. Stevens, On overconvergent modular sheaves and modular forms for $GL_{2/F}$, preprint, {\it to appear in Israel J.\ Math.}, doi:10.1007/s11856-014-1045-8.

\bibitem[B]{buzzard}
K.~Buzzard, Eigenvarieties, in  \textit{$L$-functions and Galois representations}, pp.~59--120, \textit{London Math. Soc. Lecture Note Ser.}, \textbf{320}, Cambridge Univ. Press, Cambridge, 2007.

\bibitem[CGJ]{E2}
R.~Coleman, F.~Gouv\^ ea, and N.~Jochnowitz, $E_2$, $\Theta$, and overconvergence. 
{\it Int.\ Math.\ Res.\ Not.} {\bf 1995}, no.~1, pp.~23--41.

\bibitem[DR]{DarmonRotger}
H.~Darmon and V.~Rotger, Diagonal cycles and Euler systems I: A $p$-adic Gross--Zagier formula, {\it to appear in Ann.\ Sci.\ \'{E}c.\ Norm.\ Sup\'{e}r.\ (4)}.

\bibitem[F]{fargues}
L. Fargues, La filtration de Harder--Narasimhan des sch\'emas en groupes finis et plats, {\it J. Reine Angew. Math.} {\bf 645} (2010), 1--39.

\bibitem[K]{katz}
N.~Katz, $p$-adic properties of modular schemes and modular forms, in {\it Modular functions of one variable, III}, pp. 69--190. {\it Lecture Notes in Mathematics}, Vol. {\bf 350}, Springer, Berlin, 1973.

\bibitem[P]{pilloni}
V.~Pilloni, Overconvergent modular forms, \textit{Ann. Inst. Fourier} \textbf{63} (2013), no. 1, pp.~219--239.

\bibitem[U]{urban}
E.~Urban, Nearly overconvergent modular forms, {\it  to appear in the Proceedings of conference IWASAWA 2012 held at Heidelberg}, available at 
\texttt{http://www.math.columbia.edu/\textasciitilde{}urban/EURP.html}.

\bibitem[U2]{Urban2}
E.~Urban, On the rank of Selmer groups for elliptic curves over $\QQ$, {\it Proceedings of the International Colloquium TIFR (2013)}.
\end{thebibliography}
\end{document}